\title{On the equationally Artinian groups}
\author{M. Shahryari, J. Tayyebi}
\address{M. Shahryari: Department of Pure Mathematics,  Faculty of Mathematical
Sciences, University of Tabriz, Tabriz, Iran}
\email{mshahryari@tabrizu.ac.ir}
\address{j. Tayyebi: Department of Pure Mathematics,  Faculty of Mathematical
Sciences, University of Tabriz, Tabriz, Iran}
\email{j.tayyebi2017@gmail.com}
\newcommand{\Rad}{\mathrm{Rad}}
\newcommand{\VH}{V_H(S)}
\newcommand{\GX}{G[X]}
\newtheorem{corollary}{Corollary}
\newtheorem {theorem}{Theorem}
\newtheorem{lemma}{Lemma}
\begin{document}

\maketitle
\begin{abstract}
In this article, we study the property of being equationally Artinian in groups. We prove that a finite extension of an equationally Artinian group is again equationally Artinian. We also show that a quotient of an equationally Artinian group of the form $G[t]$ by a normal subgroup which is a finite union of radicals, is again equationally Artnian. This provides a large class of examples of equationally Artinian groups.
\end{abstract}

{\bf AMS Subject Classification} 20F70\\
{\bf Keywords} algebraic geometry over groups; systems of group equations; radicals; Zariski topology; radical topology; equationally Noetherian groups; equationally Artinian groups.

\vspace{1cm}
In the mid twentieth century, Alfred Tarski asked whether  two arbitrary non-abelian free groups are elementary equivalent. To answer this question, it was necessary to investigate systems of equations over groups. Makanin and Razborov proved that the existence of solutions for systems of equations over free groups is a decidable problem and an algorithm to solve such systems of equation is discovered (Makanin-Razborov diagrams, see \cite{Mak} and \cite{Raz}). The work of Makanin and Razborov as well as many other mathematicians was the beginning of {\em algebraic geometry over groups}. Since then, this new area of algebra was the subject of important studies in group theory. The work of Baumslag, Myasnikov and Remeslennikov provides a complete account of this new subject, \cite{BMR}. Positive solution to the problem  of Tarski is discovered by Kharlampovich, Myasnikov and Sela at the the beginning of the recent century (see \cite{KMTarski}, \cite{KM}, \cite{KMTarski2} and \cite{SEL}). After that, many mathematicians investigated the algebraic geometry over general algebraic systems and this new area of algebra is now known as {\em universal algebraic geometry}. The reader can see the works of Daniyarova, Myasnikov, and Remeslennikov as well as the lecture notes of Plotkin as introduction to this branch, \cite{DMR1}, \cite{DMR2}, \cite{DMR3}, \cite{DMR4}, and \cite{Plot}.

One of the very important notions in the algebraic geometry of groups (as well as other algebraic structures) is the property of being {\em equationally Noetherian}. Note that if $S$ is a system of equations over a group $A$, then we say that the system $S$ implies an equation $w\approx 1$, if every solution of $S$ in $A$ is also a solution of $w\approx 1$. This gives us an equational logic over the group $A$ which is not in general similar to the first order logic. For example, the compactness theorem may fails in this equational logic. There are examples of groups such that the compactness for the systems of equations fails (see \cite{BMR} and \cite{DMR3} for some examples). In some groups, every system of equations is equivalent to a finite subsystem, such groups are called equationally Noetherian. Free groups, Abelian groups, linear groups over Noetherian rings and torsion-free hyperbolic groups are equationally Noetherian. To see interesting properties of this types of groups, the reader can consult \cite{ModSH} and \cite{ShahShev}. This kind of groups have very important roles in algebraic geometry of groups. There are many equivalent conditions for the property of being equationally Noetherian, for example, it is know that a group $A$ has this property, if and only if, for any natural number $n$, every descending chain of algebraic sets in $A^n$ is finite. According to this equivalent condition, in \cite{ModSH} and \cite{ModSH2}, the dual property of being {\em equationally Artinian}  is defined. A group $A$ is equationally Artinian, if and only if, for any natural number $n$, every ascending  chain of algebraic sets in $A^n$ is finite. In \cite{ModSH2}, many equivalent conditions to this property is given.

In 1997, Baumslag, Myasnikov, and   Romankov proved two important theorems about equationally Noetherian groups: first, they showed that a virtually equationally Noetherian group is equationally Noetherian. They also showed that quotient of an equationally Noetherian group by a normal subgroup which is a finite union of algebraic sets, is again equationally Noetherian (see \cite{BMRom}). In this Article we prove similar results for the case of equationally Artinian groups. These results will provide  a large class of examples for equationally Artinian groups.

\section{Preliminaries}
Let $G$ be an arbitrary group and suppose that $X=\{x_1, \ldots, x_n\}$ is a finite set of variables. Consider the free product $\GX=G\ast F[X]$, where $F[X]$ is the free group over $X$. Every element $w\in \GX$ corresponds to an equation $w\approx 1$, which is called a group equation with coefficients from $G$. If $w=w(x_1,\ldots, x_n, g_1, \ldots, g_m)\in G[X]$, then the expression $w\approx 1$ is a $G$-equation with  coefficient $g_1,\ldots, g_m\in G$. Suppose $H$ is a group which contains $G$ as a distinguished subgroup. Then we say that $H$ is a $G$-group. A tuple $\overline{h}=(h_1, \ldots, h_n)\in H^n$ is called a root of the equation $w\approx 1$, if
$$
w(h_1, \ldots, h_n, g_1, \ldots, g_m)=1.
$$
An arbitrary set of $G$-equations is called {\em a system of equation with coefficients from $G$}. The set of all common roots of the elements of $S$ in $H$ is called the corresponding {\em algebraic set} of $S$ and denoted by $\VH$. Clearly, the intersection of a non-empty family  of algebraic sets is again an algebraic set but the same is not true for unions of algebraic sets. If we define a closed subset of $H^n$ to be an arbitrary intersection of finite unions of algebraic sets, then we get a topology on $H^n$, which is known as {\em Zariski topology}.

For a subset $E\subseteq H^n$, we define the corresponding radical $\Rad(E)$ to be the set of all elements $w\in \GX$ such that every element of $E$ is a solution of $w\approx 1$. This is a normal subgroup of $\GX$ which is called the {\em radical} of $E$ and the quotient group $\Gamma(E)=\GX/\Rad(E)$ is called the {\em coordinate group} of $E$. Similarly, for a system $S$, we define its radical to be $\Rad_H(S)=\Rad(\VH)$. This is the largest system of $G$-equations equivalent to $S$ over $H$. The corresponding coordinate group is $\Gamma_H(S)=\GX/\Rad_H(S)$. It is proved that the study of coordinate groups is equivalent to the study of Zariski topology, i.e. algebraic geometry of $H$ reduces to the study coordinate groups, \cite{BMR}.

A $G$-group $H$ is called $G$-equationally Noetherian, if for every system $S$, there exists a finite subsystem $S_0$, such that $\VH=V_H(S_0)$. Such $G$-groups have important role in the study of algebraic geometry over $G$-groups. There are two extremal cases: if $G=1$, we say that $H$ is $1$-equationally Noetherian or equationally Noetherian without coefficients, and if $G=H$, then we say that $H$ is equationally Noetherian (or equationally Noetherian in Diophantine sense). It is proved that a $1$-equationally Noetherian finitely generated group is equationally Noetherian as well, \cite{BMR}. The class of equationally Noetherian groups is very large, containing all  Free groups, Abelian groups, linear groups over Noetherian rings and torsion-free hyperbolic groups are equationally Noetherian. It is not hard to see that the following statements are equivalent for a $G$-group $H$:\\

i- $H$ is equationally Noetherian.

ii- the Zariski topology on $H^n$ is Noetherian for all $n$.

iii- every chain of coordinate groups and epimorphisms
$$
\Gamma(E_1)\to \Gamma(E_2)\to \Gamma(E_3)\to \cdots
$$
is finite. \\

The authors of \cite{BMRom} proved two important theorems about equationally Noetherian groups. The first theorem shows that a finite extension of an equationally Noetherian group is again equationally Noetherian. The second theorem says that if $G$ is equationally Noetherian and $N$ is a normal subgroup which is a finite union of algebraic sets (in Diophantine case), then $G/N$ is also equationally Noetherian. In this article, we are dealing with the dual notion, the property of being equationally Artinian and we prove the similar statements for this type of groups.

\section{Equationally Artinian groups}
Equationally Artinian algebras introduced in \cite{ModSH} and \cite{ModSH2}. In this section, we review this notion for the case of $G$-groups. We say that a $G$-group $H$ is $G$-equationally Artinian, if for any $n$, every ascending chain of algebraic sets in $H^n$ terminates. This is not equivalent to the property of being Artinian for the Zariski topology, instead we define a new topological space which becomes Noetherian if $H$ is equationally Artinian. Suppose
$$
T=\{ u\Rad(E):\ E\subseteq H^n,\ u\in \GX\}.
$$
Since the intersection of two cosets of radicals, is again a coset of a radical subgroup, so this set $T$ gives us a topology on the set $\GX$ which is called {\em the radical topology} on $\GX$ corresponding to $H$ (this topology is finer than the previous one defined in \cite{ModSH2}, but every thing remains unchanged here). Every closed set in $\GX$ is an arbitrary intersection of finite unions of cosets of the form $u\Rad(E)$, with $E\subseteq H^n$ and  $u\in \GX$. In \cite{ModSH2}, it is proved that the following statements are equivalent for a $G$-group $H$:\\

i- $H$ is  $G$-equationally Artinian.

ii- for any $n$ and any subset $E\subseteq H^n$, there exists a finite subset $E_0\subseteq E$, such that $\Rad(E)=\Rad(E_0)$.

iii- the corresponding radical topology over $\GX$ is Noetherian.\\

By $(EA)_G$, we denote the class of all $G$-equationally Artinian $G$-groups, by $(EA)_1$, the class of $1$-equationally Artinian groups and $EA$ will be used for the class of Equationally Artinian groups (Diophantine case where $G=H$). In this article, we first prove the following theorem.

\begin{theorem}
Let $G$ be a finitely generated group and let $H$ be a $G$-group. If $H\in (EA)_1$, then $H\in (EA)_G$, and as a result, any finitely generated element of $(EA)_1$ is equationally Artinian.
\end{theorem}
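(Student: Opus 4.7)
The plan is to reduce the $G$-equational theory of $H$ to the coefficient-free ($1$-equational) theory by replacing the finitely many generators of $G$ with fresh variables. Fix generators $g_1,\ldots,g_k$ of $G$ and introduce a new variable set $Y=\{y_1,\ldots,y_k\}$ disjoint from $X=\{x_1,\ldots,x_n\}$. There is a canonical surjective homomorphism
$$
\pi\colon F[X\cup Y]\longrightarrow G[X],\qquad y_i\mapsto g_i,\quad x_j\mapsto x_j,
$$
realizing coefficients as images of the new variables.

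Given an arbitrary $E\subseteq H^n$, I would lift it to $\widetilde{E}\subseteq H^{n+k}$ by appending the constant tuple of generators:
$$
\widetilde{E}=\bigl\{(h_1,\ldots,h_n,g_1,\ldots,g_k)\ :\ (h_1,\ldots,h_n)\in E\bigr\}.
$$
The key identity, which I would verify directly from the definitions, is
$$
\pi^{-1}\bigl(\Rad_G(E)\bigr)=\Rad_1(\widetilde{E}),
$$
because for $\tilde w\in F[X\cup Y]$ the evaluation $\tilde w(\bar h,g_1,\ldots,g_k)$ coincides with $\pi(\tilde w)(\bar h)$ for every $\bar h\in E$. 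Hence $\tilde w$ annihilates $\widetilde{E}$ (as a coefficient-free equation) precisely when $\pi(\tilde w)$ lies in $\Rad_G(E)$.

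Now I would apply the hypothesis $H\in(EA)_1$, using equivalent condition (ii), to the set $\widetilde{E}\subseteq H^{n+k}$. This yields a finite subset of $\widetilde{E}$ with the same radical; since every subset of $\widetilde{E}$ is automatically of the form $\widetilde{E_0}$ for some $E_0\subseteq E$, I obtain a finite $E_0\subseteq E$ with $\Rad_1(\widetilde{E_0})=\Rad_1(\widetilde{E})$. Applying $\pi$ and invoking surjectivity (so that $\pi(\pi^{-1}(A))=A$), the key identity transports this equality to
$$
\Rad_G(E_0)=\Rad_G(E),
$$
which is exactly condition (ii) for $H\in(EA)_G$. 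The second assertion follows by specialising $G=H$, which is legitimate because $H$ is assumed finitely generated.

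There is no serious obstacle; the proof is essentially a bookkeeping argument. The delicate points are that finite generation of $G$ is indispensable, since otherwise we would enlarge $X$ by infinitely many new variables and the reduction to $(EA)_1$ would no longer produce a finite subset, and that the special ``cartesian'' form of $\widetilde{E}$ (only the first $n$ coordinates vary) is what guarantees that a finite subset of $\widetilde{E}$ descends to a finite subset of $E$. Verifying the radical identity $\pi^{-1}(\Rad_G(E))=\Rad_1(\widetilde{E})$ is the single conceptual step of the argument, and everything else is a formal consequence of surjectivity of $\pi$.
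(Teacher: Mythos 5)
Your proposal is correct and follows essentially the same route as the paper: append the generator tuple $(g_1,\ldots,g_k)$ as constant extra coordinates to form $\widetilde{E}\subseteq H^{n+k}$, apply the $(EA)_1$ hypothesis there, and descend the resulting finite subset back to $E$. Your formulation via the surjection $\pi$ and the identity $\pi^{-1}(\Rad_G(E))=\Rad_1(\widetilde{E})$ is just a cleaner packaging of the paper's step of ``replacing each coefficient $a_i$ by a variable $y_i$'' and proving $S(\overline{x},\overline{y})=\Rad_1(T)$.
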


As an application, we will prove that every finitely generated Abelian group is equationally Artinian. Note that, this is not true for all Abelian groups. It is proved that (see \cite{ModSH}) every Abelian group is equationally Artinian if and only if, every finitely generated torsion-free Abelian group satisfies the minimum condition, but clearly the infinite cyclic group does not satisfy the minimum condition, so there are Abelian groups which are not equationally Artinian. For example, we will show that the additive group $\mathbb{Q}/\mathbb{Z}$ is not so, but in fact there are many infinitely generated Abelian groups (for example, additive groups of the fields $\mathbb{Q}$ and $\mathbb{R}$) which are equationally Artinian. As a result, we see that the classification of equationally Artinian Abelian groups is an interesting problem.

Our next theorem, will concern the groups with a finite index equationally Artinian subgroup.

\begin{theorem}
Let a group $A$ contains a finite index subgroup $H$ which is equationally Artinian. Then $A$ is also equationally Artinian.
\end{theorem}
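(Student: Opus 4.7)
My plan first reduces to the case where $H$ is normal in $A$, using the normal core $H_0=\bigcap_{a\in A}aHa^{-1}$, a finite-index normal subgroup of $A$ contained in $H$. The key descent observation is that for any $E\subseteq H_0^n$,
\[
\mathrm{Rad}_{H_0}(E)=\mathrm{Rad}_H(E)\cap H_0[X],
\]
because an $H_0$-word on a tuple from $H_0^n$ evaluates to the same element whether computed in $H_0$ or in $H$. A finite $E_0\subseteq E$ realising $\mathrm{Rad}_H(E)=\mathrm{Rad}_H(E_0)$ (furnished by $H\in EA$ via the characterization $\mathrm{(ii)}$ from the preliminaries) therefore also satisfies $\mathrm{Rad}_{H_0}(E)=\mathrm{Rad}_{H_0}(E_0)$, so $H_0\in EA$. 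Henceforth I assume $H\trianglelefteq A$, fix coset representatives $A=\bigsqcup_{i=1}^{k}Ht_i$ with $t_1=1$, and for each $\tau\in\{1,\dots,k\}^n$ let $A^n_\tau=\{(h_1t_{\tau(1)},\dots,h_nt_{\tau(n)}):h_j\in H\}$ with the natural bijection $\Phi_\tau\colon H^n\to A^n_\tau$. For any $E\subseteq A^n$, partition $E=\bigsqcup_\tau E_\tau$ and set $F_\tau:=\Phi_\tau^{-1}(E_\tau)$. Since $\mathrm{Rad}_A(E)=\bigcap_\tau\mathrm{Rad}_A(E_\tau)$ and there are only $k^n$ pieces, it suffices to produce a finite $E_\tau^0\subseteq E_\tau$ with $\mathrm{Rad}_A(E_\tau)=\mathrm{Rad}_A(E_\tau^0)$ for each $\tau$.

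The main step is a twist rewriting that converts each $A$-equation vanishing on $E_\tau$ into an $H$-equation vanishing on $\Theta(F_\tau)$, where $\Theta\colon H^n\to H^{nk}$ is the injective map $\Theta(\vec h)=(t_ih_jt_i^{-1})_{j,i}$ (injectivity follows from $t_1=1$, since the $i=1$ block recovers $h_j$). For $v=a_0x_{j_1}^{\epsilon_1}a_1\cdots x_{j_r}^{\epsilon_r}a_r\in A[X]$, repeated application of $ah=(aha^{-1})a$---legitimate because $aha^{-1}\in H$ by normality---pushes every substituted $h_{j_s}^{\epsilon_s}$ past the $A$-coefficients. The resulting conjugate $\phi_b(h_{j_s})^{\epsilon_s}$, for $b\in A$ decomposed as $b=h't_i$, equals $h'z_{j_s,i}^{\epsilon_s}h'^{-1}$ in the new variables $Z=\{z_{j,i}\}$. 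Writing the remaining constant $A$-tail in normal form as $h_{v,\tau}t_{\mu(v,\tau)}$, one obtains
\[
v(\Phi_\tau(\vec h))=\tilde h_v(\Theta(\vec h))\cdot t_{\mu(v,\tau)},
\]
where $\mu(v,\tau)$ is independent of $\vec h$ and $\tilde h_v\in H[Z]$ is a bona fide $H$-word. Consequently, when $F_\tau\neq\emptyset$, $v\in\mathrm{Rad}_A(E_\tau)$ if and only if $\mu(v,\tau)=1$ and $\tilde h_v\in\mathrm{Rad}_H(\Theta(F_\tau))$.

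Applying $H\in EA$ via characterization $\mathrm{(ii)}$ to $\Theta(F_\tau)\subseteq H^{nk}$ yields a finite $G_0\subseteq\Theta(F_\tau)$ with $\mathrm{Rad}_H(G_0)=\mathrm{Rad}_H(\Theta(F_\tau))$; injectivity of $\Theta$ writes $G_0=\Theta(F_\tau^0)$ for a finite $F_\tau^0\subseteq F_\tau$, and the characterization above forces $\mathrm{Rad}_A(E_\tau)=\mathrm{Rad}_A(E_\tau^0)$ with $E_\tau^0:=\Phi_\tau(F_\tau^0)$. The finite union $E_0:=\bigcup_\tau E_\tau^0$ then satisfies $\mathrm{Rad}_A(E)=\mathrm{Rad}_A(E_0)$, proving $A\in EA$. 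The principal obstacle is verifying that the twist rewriting truly terminates in an element of $H[Z]$: this succeeds precisely because normality of $H$ keeps every conjugate inside $H$, while the finite index $[A:H]=k$ bounds the number of distinct automorphism types arising from the coset representatives and so keeps the auxiliary variable set $Z$ finite.
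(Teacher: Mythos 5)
Your proof is correct, and while it rests on the same underlying engine as the paper's (the Baumslag--Myasnikov--Romankov rewriting of an $A$-equation into an $H$-equation by pushing the $H$-parts of a tuple leftward past coefficients and transversal elements, at the cost of introducing the conjugate variables $z_{j,i}$), your route to that engine is genuinely different. The paper first passes to the normal core, then embeds $A$ into the wreath product $H\wr T$ so as to work inside a \emph{split} extension $A=TH$; this forces it to invoke two auxiliary facts, namely that a finite direct power $H^{|T|}$ of an equationally Artinian group is equationally Artinian (its Lemma 2) and that subgroups inherit the property, and it then organizes the argument around the finitely many solutions $v_1,\dots,v_d$ of $\overline{S}$ in $T^n$ together with the sets $K_i$ and $L_i$. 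You instead work directly with a transversal $\{t_1=1,\dots,t_k\}$ of the (non-split, in general) extension, partition $E$ into the $k^n$ coset types $E_\tau$, and absorb the $H$-part of the constant tail into the rewritten word, so that the root condition becomes simply $\mu(v,\tau)=1$ together with $\tilde h_v\in\mathrm{Rad}_H(\Theta(F_\tau))$. This buys you three things: you avoid the wreath-product embedding and the direct-product lemma entirely; your partition by coset type plays exactly the role of the paper's $V_T(\overline{S})$ but requires no computation in a complement; and your argument applies verbatim to an arbitrary subset $E\subseteq A^n$, which is what characterization (ii) of the preliminaries actually demands, whereas the paper's proof as written assumes $E$ is an algebraic set (it uses $V_A(S)=E$ to check $E_0\subseteq E$). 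Your handling of the degenerate case $F_\tau=\emptyset$ and the injectivity of $\Theta$ via the $i=1$ block are both correct; the only detail worth spelling out in a final write-up is that the conjugators arising in the rewriting are prefixes built from the coefficients of $v$ and the representatives $t_{\tau(j_s)}^{\pm1}$, hence depend only on $v$ and $\tau$ and not on $\vec h$, which is what makes $\tilde h_v$ and $\mu(v,\tau)$ well defined.
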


This theorem enables us to conclude that any virtually finitely generated Abelian group is equationally Artinian as well as any finite extension of the additive group of any field. This gives us a large class of examples of such groups. The proof of Theorem 2, requires some technics of \cite{BMRom} but it has also some new arguments which are not applied in \cite{BMRom}.

Note that the quotient of an equationally Artinian group is not necessarily equationally Artinian, but, there exists an important situation, the quotient in which, has this property. Our last theorem concerns with these situations. Note that in this theorem, we use the group $G[t]=G\ast \langle t\rangle$.

\begin{theorem}
Let $G$ be an arbitrary group such that $G[t]$ is equationally Artinian. Let  $R$ be a normal subgroup of $G[t]$ which is  a finite union of radical subgroups. Then $G[t]/R$ is also equationally Artinian.
\end{theorem}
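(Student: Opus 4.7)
The plan is to show that the canonical surjection $\sigma \colon G[t][X] \to K[X]$, where $K = G[t]/R$ and $X = \{x_1, \ldots, x_n\}$, is continuous with respect to the radical topologies on source and target, and then to transfer the Noetherianness of the former (given by $G[t] \in EA$ via equivalence (iii)) to the latter, thereby establishing $K \in EA$. Here $\pi \colon G[t] \to K$ is the quotient, and $\sigma$ is the homomorphism it induces which is the identity on $F[X]$.

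First I would record the following identity, which is the basic bookkeeping of the proof. Given $F \subseteq K^n$, fix a lift $\bar{\widetilde{f}} \in G[t]^n$ of each $\bar{f} \in F$ and let $\widetilde{F}$ be the set of such lifts. Because $\pi(\widetilde{w}(\bar{\widetilde{f}}))$ depends only on $\sigma(\widetilde{w})$ and $\bar{f}$, one has
$$\sigma^{-1}(\Rad_K(F)) = \bigcap_{\bar{g} \in \widetilde{F}} \Sigma(\bar{g}), \qquad \Sigma(\bar{g}) := \{\widetilde{w} \in G[t][X] : \widetilde{w}(\bar{g}) \in R\}.$$
Using the hypothesis $R = R_1 \cup \cdots \cup R_k$ with each $R_i$ a radical subgroup of $G[t]$, this decomposes further as $\Sigma(\bar{g}) = \bigcup_{i=1}^k \Sigma_i(\bar{g})$, where $\Sigma_i(\bar{g}) = \{\widetilde{w} : \widetilde{w}(\bar{g}) \in R_i\}$.

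The crux of the argument is the \emph{Key Lemma}: each $\Sigma_i(\bar{g})$ is itself a radical subgroup of $G[t][X]$. To prove it I would identify $G[t][X]$ with the free product $G[\{t\} \cup X]$ and rewrite the two-step substitution $\widetilde{w}(\bar{g})(h)$ — first $x_j \mapsto g_j$, then $t \mapsto h$ — as the single simultaneous evaluation $\widetilde{w}(h, g_1(h), \ldots, g_n(h))$, where $g_j(h)$ denotes $g_j$ with $t$ replaced by $h$. Writing $R_i = \Rad(E_i)$ for $E_i$ in a suitable $G$-group $H_i$, the condition $\widetilde{w}(\bar{g}) \in R_i$ becomes the requirement that $\widetilde{w}$ vanish on $Y_{i,\bar{g}} = \{(h, g_1(h), \ldots, g_n(h)) : h \in E_i\} \subseteq H_i^{n+1}$, so that $\Sigma_i(\bar{g}) = \Rad(Y_{i,\bar{g}})$. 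Consequently $\Sigma(\bar{g})$ is a finite union of radical subgroups, hence closed in the radical topology on $G[t][X]$, and $\sigma^{-1}(\Rad_K(F))$ is an intersection of such closed sets, hence also closed.

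From here the proof concludes formally. Since $G[t]$ is equationally Artinian, the radical topology on $G[t][X]$ is Noetherian, so the intersection above coincides with the intersection over some finite $\widetilde{F}_0 \subseteq \widetilde{F}$. Setting $F_0 = \pi^n(\widetilde{F}_0)$, a finite subset of $F$, one obtains $\sigma^{-1}(\Rad_K(F_0)) = \sigma^{-1}(\Rad_K(F))$; surjectivity of $\sigma$ together with the fact that $\ker \sigma$ lies in both sides yields $\Rad_K(F_0) = \Rad_K(F)$, which is condition (ii) for $K$. The principal obstacle is the Key Lemma: one must move fluently between the two viewpoints on $G[t][X]$, correctly track the $G$-group in which each $R_i$ is realized, and check that the collapsed evaluation indeed identifies $\Sigma_i(\bar{g})$ with a genuine radical subgroup.
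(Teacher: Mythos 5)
Your proposal is correct, but it takes a genuinely different route from the paper's. The paper argues by contradiction: assuming $G[t]/R$ is not equationally Artinian, it builds sequences $f_i\in H[X]$ and $e_i\in E$ with $f_i(e_j)=1$ for $j<i$ but $f_i(e_i)\neq 1$, lifts them to $G[t]$, and runs a pigeonhole argument over the finitely many radicals constituting $R$ to extract nested subsequences, finally contradicting the Artinian property of $G[t]$ applied to the infinite union $\bigcup_i\overline{e}_i(K_{p_i})$. Your argument is direct: you exhibit $\sigma^{-1}(\Rad_K(F))$ as an intersection of sets $\Sigma(\bar{g})$, each a finite union of radical subgroups by your Key Lemma and hence closed in the radical topology, and then extract a finite subintersection by Noetherianity. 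The Key Lemma is essentially the same computation that powers the paper's proof --- the identity $\widetilde{w}(\bar{g})(h)=\widetilde{w}(h,g_1(h),\ldots,g_n(h))$ converting ``$\widetilde{w}(\bar{g})\in\Rad(E_i)$'' into ``$\widetilde{w}$ lies in the radical of a derived set'' is exactly the paper's passage from $\overline{f}(\overline{e})\in\Rad_G(K_p)$ to $\overline{f}\in\Rad_{G[t]}(\overline{e}(K_p))$ --- but packaging it as closedness of $\Sigma(\bar{g})$ eliminates both the contradiction and the subsequence bookkeeping, makes transparent why ``finite union of radicals'' is precisely the right hypothesis on $R$, and dispenses with the paper's reduction to finite $K_i$. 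Two points should be tightened. First, the topology you need to be Noetherian is the radical topology on $G[\{t\}\cup X]$ in $n+1$ variables relative to the group $G$ (your sets $Y_{i,\bar{g}}$ lie in $G^{n+1}$, not in $(G[t])^n$), so the property you must invoke is the equational Artinianity of $G$ --- which does follow from that of $G[t]$, either because $G$ is a subgroup or by intersecting $G[t]$-radicals with $G[\{t\}\cup X]$ --- rather than literally equivalence (iii) for $G[t]$. Second, in the Diophantine setting of the theorem you should take the $R_i$ to be radicals $\Rad_G(E_i)$ of subsets $E_i$ of the single group $G$, as the paper does, rather than of subsets of varying $G$-groups $H_i$; otherwise $\Sigma(\bar{g})$ is a union of radicals taken with respect to different groups and is not obviously closed in any one radical topology. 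With these adjustments your proof is complete and arguably cleaner than the published one.
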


\section{The proofs}
First, we prove Theorem 1.

\begin{proof}
Let $a_1, \ldots, a_k$ be a finite set of generators for the group $G$. Suppose $E\subseteq H^n$. We prove that there exists a finite subset $E_0\subseteq E$, such that $\Rad_G(E)=\Rad_G(E_0)$ (note that, here $\Rad_G$ denotes the radical with coefficient in $G$).   Let $S=\Rad_G(E)\subseteq \GX$. Every element of $S$ has the form
$$
w=w(x_1, \ldots, x_n, a_1, \ldots, a_k).
$$
We replace every coefficient $a_i$ by a new variable $y_i$, and then a coefficient-free system of equations $S(\overline{x}, \overline{y})$ appears. Let $T=E\times \{(a_1, \ldots, a_k)\}\subseteq H^{n+k}$. Now, since $H\in (EA)_1$, so there is a finite subset $T_0\subseteq T$, such that $\Rad_1(T)=\Rad_1(T_0)$. Clearly, we have $T_0=E_0\times \{(a_1, \ldots, a_k)\}$, for some finite subset $E_0\subseteq E$. Obviously, $S(\overline{x}, \overline{y})\subseteq \Rad_1(T)$. Let $u(\overline{x}, \overline{y})\in \Rad_1(T)$. Then for all $\overline{e}\in E$, we have $u(\overline{e}, \overline{a})=1$, so $u(\overline{x}, \overline{a})\in \Rad_G(E)$, and therefore $u\in S(\overline{x}, \overline{y})$. This proves that $S(\overline{x}, \overline{y})=\Rad_1(T)$, and hence $S(\overline{x}, \overline{y})=\Rad_1(T_0)$.

Now, we show that $S(\overline{x}, \overline{a})=\Rad_G(E_0)$. Suppose $w(\overline{x}, \overline{a})\in S(\overline{x}, \overline{a})$. For any $\overline{e}\in E_0$, we have $w(\overline{e}, \overline{a})=1$, so $w(\overline{x}, \overline{a})\in \Rad_G(E_0)$. Conversely, if $w(\overline{x}, \overline{a})\in \Rad_G(E_0)$, then for $w(\overline{x}, \overline{y})\in \Rad_1(T_0)=\Rad_1(T)$, and this shows that $w(\overline{x}, \overline{a})\in \Rad_G(E)=S(\overline{x}, \overline{a})$. This proves that $\Rad_G(E)=\Rad_G(E_0)$ and hence $H\in (EA)_G$.
\end{proof}

Theorem 1, enables us to prove that every finitely generated Abelian group belongs to the class $EA$. To do this, we prove that the infinite cyclic group  is equationally Artinian.

\begin{lemma}
Let $H=\langle a\rangle$ be infinite cyclic group. Then $H$ is equationally Artinian.
\end{lemma}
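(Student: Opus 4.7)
The plan is to invoke Theorem 1 to reduce the problem to showing $H\in(EA)_1$, and then to describe the algebraic sets in $H^n$ completely explicitly as subspaces of a finite-dimensional vector space.

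First I would observe that since $H=\langle a\rangle$ is (cyclic and hence) finitely generated, Theorem 1 applies, so it suffices to verify that $H$ is $1$-equationally Artinian, i.e.\ that every ascending chain of algebraic sets $V_H(S)\subseteq H^n$ defined by coefficient-free equations stabilizes. Writing $H\cong\mathbb{Z}$ additively and identifying $H^n$ with $\mathbb{Z}^n$, I would note that for every $w\in F[X]$ with exponent sums $\sigma_1(w),\dots,\sigma_n(w)$ of the variables $x_1,\dots,x_n$, one has
\[
 w(a^{m_1},\dots,a^{m_n})=a^{\sigma_1(w)m_1+\cdots+\sigma_n(w)m_n},
\]
so $(m_1,\dots,m_n)\in V_H(\{w\})$ if and only if $\sigma_1(w)m_1+\cdots+\sigma_n(w)m_n=0$. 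Consequently, for any $S\subseteq F[X]$, the algebraic set $V_H(S)\subseteq\mathbb{Z}^n$ is exactly the solution set of the homogeneous system of integer linear equations $\{\sum_i\sigma_i(w)m_i=0:w\in S\}$.

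The next step is to recognize that the algebraic subsets of $\mathbb{Z}^n$ arising this way are precisely the pure (i.e.\ direct-summand) subgroups of $\mathbb{Z}^n$, and that the assignment $U\mapsto U\otimes\mathbb{Q}$ (equivalently, $U\mapsto\mathbb{Q}U\subseteq\mathbb{Q}^n$) gives a bijection between them and the $\mathbb{Q}$-subspaces of $\mathbb{Q}^n$, with inverse $V\mapsto V\cap\mathbb{Z}^n$. This reduction is order-preserving: if $V_H(S_1)\subseteq V_H(S_2)$ then $\mathbb{Q}\,V_H(S_1)\subseteq\mathbb{Q}\,V_H(S_2)$, and because each algebraic set is recovered from its $\mathbb{Q}$-span by intersecting with $\mathbb{Z}^n$, equality of $\mathbb{Q}$-spans forces equality of algebraic sets.

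Finally, since $\mathbb{Q}^n$ has finite $\mathbb{Q}$-dimension $n$, every ascending chain of its subspaces has length at most $n+1$, and the correspondence of the previous step lifts this back to the statement that every ascending chain of algebraic subsets of $H^n$ terminates. Hence $H\in(EA)_1$, and by Theorem 1 we conclude $H\in EA$. I do not anticipate a real obstacle; the only thing to write carefully is the bijection between algebraic sets in $\mathbb{Z}^n$ and subspaces of $\mathbb{Q}^n$, in particular the purity argument showing that no non-pure subgroup of $\mathbb{Z}^n$ can be an algebraic set, so that distinct chains of algebraic sets really do correspond to distinct chains of subspaces.
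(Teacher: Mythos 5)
Your argument is correct, but it takes a genuinely different route from the paper's. Both proofs begin the same way: reduce to showing $H\in (EA)_1$ and then invoke Theorem 1 (the paper states this reduction at the end of its proof; you state it at the start). The divergence is in how $(EA)_1$ is established. The paper works with the finite-radical characterization: given $E\subseteq H^n$, it encodes the condition $w=x_1^{\alpha_1}\cdots x_n^{\alpha_n}\in\Rad_1(E)$ as the system of linear equations $j_1^{(t)}\alpha_1+\cdots+j_n^{(t)}\alpha_n=0$ indexed by the elements of $E$, and then imports the fact that the additive group $\mathbb{Z}$ is equationally \emph{Noetherian} to extract a finite subsystem, hence a finite $E_0\subseteq E$ with $\Rad_1(E)=\Rad_1(E_0)$; there is a pleasant duality here, with the exponents of the tuples in $E$ playing the role of coefficients and the exponents of the word playing the role of unknowns. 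You instead verify the ascending chain condition on algebraic sets directly, by identifying each $V_H(S)\subseteq\mathbb{Z}^n$ with $\mathbb{Q}V_H(S)\cap\mathbb{Z}^n$ for a subspace of $\mathbb{Q}^n$ and lifting the ACC from the finite-dimensional subspace lattice; the key point, which you correctly flag as the thing to write carefully, is that an algebraic set is recovered from its $\mathbb{Q}$-span (every vector of the rational solution space has an integer multiple in the integral solution set), so the span map is injective and order-reflecting on algebraic sets. Your version is more self-contained, since it does not rely on the external input that $\mathbb{Z}$ is equationally Noetherian, and it matches the definition of equationally Artinian verbatim; the paper's version produces the finite subset $E_0$ explicitly, which is the form of the property that the proof of Theorem 1 actually consumes. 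Both are valid.
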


\begin{proof}
We first show that $H\in (EA)_1$. Let $E\subseteq H^n$. Every element of $E$ has the form $\overline{e}=(a^{j_1}, \ldots, a^{j_n})$ for some integers $j_1, \ldots, j_n$. Let $w=x_1^{\alpha_1}x_2^{\alpha_2}\ldots x_n^{\alpha_n}\in \Rad_1(E)$. Then $w(\overline{e})=1$ and hence $a^{j_1\alpha_1+\cdots+j_n\alpha_n}=1$. This shows that
$$
\Rad_1(E)=\bigcap_{j_1, \ldots, j_n}\{x_1^{\alpha_1}x_2^{\alpha_2}\ldots x_n^{\alpha_n}: (a^{j_1}, \ldots, a^{j_n})\in E, j_1\alpha_1+\cdots+j_n\alpha_n=0\}.
$$
Suppose
$$
E=\{ (a^{j_1^{(1)}}, \ldots, a^{j_n^{(1)}}), (a^{j_1^{(2)}}, \ldots, a^{j_n^{(2)}}), (a^{j_1^{(3)}}, \ldots, a^{j_n^{(3)}}), \ldots\}.
$$
Suppose $S$ is the following set of equations
$$
j_1^{(t)}\alpha_1+\cdots+j_n^{(t)}\alpha_n=0,\ \ (t=1, 2, 3, \ldots).
$$
Since the additive group $\mathbb{Z}$ is equationally Noetherian, so there exists a finite subset $S_0\subseteq S$, such that $V_{\mathbb{Z}}(S)=V_{\mathbb{Z}}(S_0)$. Suppose $S_0$ consists of the equations
$$
j_1^{(t)}\alpha_1+\cdots+j_n^{(t)}\alpha_n=0\ \ (t=1, 2, \ldots, m).
$$
Let $E_0=\{ (a^{j_1^{(1)}}, \ldots, a^{j_n^{(1)}}), (a^{j_1^{(2)}}, \ldots, a^{j_n^{(2)}}),\ldots,  (a^{j_1^{(m)}}, \ldots, a^{j_n^{(m)}})\}$. Then we have obviously, $\Rad_1(E)=\Rad_1(E_0)$. This shows that $H$ is $1$-equationally Artinian and hence by Theorem 1, it belongs to $EA$.
\end{proof}

Now, we show that any direct product of finitely many element of $(EA)_1$ is again in $(EA)_1$. This will prove  that every finitely generated Abelian group belongs to $(EA)_1$ and hence to $EA$.

\begin{lemma}
Suppose $A$ and $B$ are equationally Artinian. Then so is $A\times B$.
\end{lemma}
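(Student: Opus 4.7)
The plan is to verify condition (ii) of the characterization on page 3 directly: given any $E \subseteq (A\times B)^n$, I produce a finite subset $E_0 \subseteq E$ with $\mathrm{Rad}(E_0) = \mathrm{Rad}(E)$. The main idea is that the product structure of $A\times B$ is inherited by the free product $(A\times B)[X]$, so that every word over $A\times B$ decomposes into an $A$-part and a $B$-part which are evaluated independently on the two coordinates of a tuple.

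To set this up, I will invoke the universal property of the free product to define projection homomorphisms $\Pi_A \colon (A\times B)[X] \to A[X]$ and $\Pi_B \colon (A\times B)[X] \to B[X]$, acting as the coordinate projections on the coefficient factor $A\times B$ and as the identity on $F[X]$. Writing $w_A = \Pi_A(w)$, $w_B = \Pi_B(w)$, and letting $\pi_A, \pi_B$ denote the induced coordinatewise projections $(A\times B)^n \to A^n$ and $(A\times B)^n \to B^n$, a short induction on word length shows that for every $\overline{h} \in (A\times B)^n$,
\[
w(\overline{h}) = \bigl(w_A(\pi_A(\overline{h})),\ w_B(\pi_B(\overline{h}))\bigr),
\]
so that $w \in \mathrm{Rad}(E)$ if and only if $w_A \in \mathrm{Rad}_A(\pi_A(E))$ and $w_B \in \mathrm{Rad}_B(\pi_B(E))$.

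Having decoupled the two coordinates, I invoke the equational Artinianity of $A$ and $B$ to pick finite subsets $E_0^A \subseteq \pi_A(E)$ and $E_0^B \subseteq \pi_B(E)$ satisfying $\mathrm{Rad}_A(E_0^A) = \mathrm{Rad}_A(\pi_A(E))$ and $\mathrm{Rad}_B(E_0^B) = \mathrm{Rad}_B(\pi_B(E))$. For each $\overline{a} \in E_0^A$ I select a witness tuple $\overline{h}^{(\overline{a})} \in E$ with $\pi_A(\overline{h}^{(\overline{a})}) = \overline{a}$, and symmetrically for each $\overline{b} \in E_0^B$; the finite union of these selected tuples is the candidate $E_0$. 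The inclusion $\mathrm{Rad}(E) \subseteq \mathrm{Rad}(E_0)$ is immediate, and the reverse follows because any $w \in \mathrm{Rad}(E_0)$ forces $w_A$ to vanish on $E_0^A$ and $w_B$ to vanish on $E_0^B$, hence on all of $\pi_A(E)$ and $\pi_B(E)$, whence $w$ vanishes on every element of $E$.

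The only genuinely delicate step is the decomposition $w \mapsto (w_A, w_B)$ together with its coordinatewise evaluation formula: this is precisely the point where the direct product structure of $A \times B$ (as opposed to a general extension) is used, and it rests entirely on the universal property of the free product. Beyond this, the argument is a mechanical product-of-Artinian observation, and I do not anticipate further obstacles.
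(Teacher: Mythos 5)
Your proposal is correct and follows essentially the same route as the paper: project $E$ coordinatewise to $A^n$ and $B^n$, apply the Artinian property in each factor, and pull back finitely many witness tuples from $E$. In fact you are more careful than the paper at the two points it glosses over, namely the explicit decomposition $w\mapsto(w_A,w_B)$ via the universal property of the free product together with the evaluation formula, and the selection of witnesses covering both $E_0^A$ and $E_0^B$ (the paper's $E_0$, indexed by $1\leq i\leq l$ with $l\leq k$, appears to omit the tuples needed to control the $B$-coordinate).
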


\begin{proof}
For  a number $n$ and a subset $E\subseteq (A\times B)^n$, suppose that
$$
E=\{ c_i=(u_1^i, u_2^i, \ldots, u_n^i):\ i\in I\},
$$
where $I$ is an index set. We have $u_i^j=(a_i^j, b_i^j)$, for some $a_i^j\in A$ and $b_i^j\in B$. Now, let
$$
T=\{ t_i=(a_1^i, a_2^i, \ldots, a_n^i):\ i\in I\},
$$
and
$$
S=\{ s_i=(b_1^i, b_2^i, \ldots, b_n^i):\ i\in I\}.
$$
Since $A$ and $B$ are equationally Artinian, so there are two finite subsets $T_0\subseteq T$ and $S_0\subseteq S$, such that
$$
\Rad_A(T)=\Rad_A(T_0), \ \Rad_B(S)=\Rad_B(S_0).
$$
Suppose for example $T_0=\{ t_1, \ldots, t_l\}$ and $S_0=\{ s_1, \ldots, s_k\}$ and $k\geq l$. Suppose $t_i=(a_1^i, \ldots, a_n^i)$ and $s_i=(b_1^i, \ldots, b_n^i)$. Using these elements, we can define a  finite subset
$$
E_0=\{ c_i=((a_1^i, b_1^i), \ldots, (a_n^i, b_n^i): 1\leq i\leq l\},
$$
such that $\Rad(E)=\Rad(E_0)$. This shows that $A\times B$ is equationally Artinian.
\end{proof}

Summarizing, we have

\begin{corollary}
Every finitely generated Abelian group is equationally Artinian.
\end{corollary}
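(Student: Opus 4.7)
The plan is to invoke the structure theorem for finitely generated Abelian groups and glue together the ingredients already established. Any finitely generated Abelian group $A$ decomposes as a finite direct product
\[
A \cong \mathbb{Z}^r \times \mathbb{Z}/n_1 \times \cdots \times \mathbb{Z}/n_s,
\]
so it will suffice to verify that each cyclic factor lies in $(EA)_1$ and that the class $(EA)_1$ is closed under finite direct products.

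First I would handle the factors. Each infinite cyclic factor belongs to $(EA)_1$ by Lemma 1, and each finite cyclic factor $\mathbb{Z}/n_i$ is trivially $1$-equationally Artinian: if $H$ is finite, then $H^n$ is finite for every $n$, hence $H^n$ has only finitely many subsets, so any ascending chain of algebraic sets in $H^n$ must terminate. Then I would extend Lemma 2 from two to finitely many factors by a straightforward induction on the number of factors, treating the running product as a single equationally Artinian group at each stage.

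Putting these together yields $A \in (EA)_1$. Since $A$ is finitely generated, Theorem 1 then promotes this to $A \in EA$, which is the desired conclusion.

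I do not expect any genuine obstacle. The entire argument is an assembly of Lemma 1, Lemma 2, Theorem 1, and the trivial observation for finite groups; the only points requiring even token attention are the tacit appeal to the classification of finitely generated Abelian groups and the minor induction used to extend Lemma 2 to $s+r$ factors.
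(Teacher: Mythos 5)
Your proposal is correct and follows essentially the same route as the paper: decompose via the structure theorem, handle the infinite cyclic factors by Lemma 1 and the finite factors trivially, close under finite direct products by (iterating) Lemma 2, and promote from $(EA)_1$ to $EA$ via Theorem 1. The paper presents this corollary as an immediate summary of exactly these ingredients, so no further comment is needed.
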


There are also infinitely generated Abelian groups which are equationally Artinian: let $K$ be a field and consider its additive group $H=(K, +)$. Every equation with coefficient in $K$ has the form $a_1x_1+\cdots+a_nx_n=b$ for some elements $a_1, \ldots, a_n\in \mathbb{Z}, b\in K$, so the corresponding algebraic set is an affine subspace of $K^n$. This shows that every ascending chain of algebraic sets terminates and hence $H$ is equationally Artinian. However, some Abelian groups are not equationally Artinian. For example, consider the additive group $H=\mathbb{Q}/\mathbb{Z}$. Let
$$
E=\{ \frac{1}{p}+\mathbb{Z}:\ p=prime\}\subseteq H^1.
$$
If $w(x)=mx+(\frac{a}{b}+\mathbb{Z})\in \Rad(E)$, then for any prime $p$,  we have $w(\frac{1}{p}+\mathbb{Z})=\mathbb{Z}$, and this means that for any prime $p$, $\frac{m}{p}+\frac{a}{b}\in \mathbb{Z}$, which is not true.

Before proving Theorem 2, we introduce some notations from \cite{BMRom}. Let a group $A$ be the semidirect product of a finite subgroup $T$ and a normal subgroup $H$. Assume that $T=\{ t_1=1, t_2, \ldots, t_k\}$. Let $w(x_1, \ldots, x_n, g_1, \ldots, g_m)$ be a group word with coefficients  in $A$ and $v\in A^n$. We can express $v$ uniquely in the form $v=(s_1h_1,\ldots, s_nh_n)$ with $s_i\in T$ and $h_i\in H$. We also have $g_i=r_ib_i$ for unique elements $r_i\in T$ and $b_i\in H$. Define the map $\lambda:A^n\to T^n$ by $\lambda(v)=(s_1, \ldots, s_n)$ and
$$
\overline{w}(x_1, \ldots, x_n)=w(x_1, \ldots, x_n, r_1, \ldots, r_m).
$$
Note that $\overline{w}$ is an element of $T[X]$ which depends only on $w$. For any $1\leq i\leq n$ and $1\leq j\leq k$, define $h_{ij}=t_j^{-1}h_it_j\in H$. Denote the tuple
$$
(h_{11}, \ldots, h_{1k}, \ldots, h_{n1}, \ldots, h_{nk})
$$
by $v^{\prime}$. Consider the new variables $y_{ij}$ for $1\leq i\leq n$ and $1\leq j\leq k$. In \cite{BMRom}, it is proved that there exists a unique element
$$
w^{\prime}_v\in H[y_{11} \ldots, y_{1k}, \ldots, y_{n1}, \ldots, y_{nk}],
$$
such that $w(v)=\overline{w}(\lambda(v))w^{\prime}_v(v^{\prime})$, and $w^{\prime}_v$ depends only on the value of $\lambda(v)$. As a result, it is shown that $v\in A^n$ is a root of $w\approx 1$, if and only if, $\lambda(v)$ is a root of $\overline{w}\approx 1$ and $v^{\prime}$ is a root of $w^{\prime}_v\approx 1$. We are now ready to prove Theorem 2.

\begin{proof}
Replacing $H$ by its core, we can suppose that $H$ is a normal subgroup of $A$ with finite index. Let $T=A/H$. Then $A$ embeds into the wreath product $H\wr T$. Recall that this wreath product is the semidirect product of $T$ and $H^{|T|}$. We know that (Lemma 2), $H^{|T|}$ is equationally Artinian and any subgroup of an equationally Artinian group is again equationally Artinian. So, it is enough to prove our theorem using the further assumption $A=TH$, with $T$ finite, $H$ normal and $T\cap H=1$. We will use all the above notations.

Suppose $E\subseteq A^n$ is an algebraic set and $S=\Rad_A(E)$. We must show that there exists a finite subset $E_0\subseteq E$, such that $\Rad_A(E_0)=S$. Let $\overline{S}=\{ \overline{w}:\ w\in S\}$ (see the above discussion). Suppose
$$
V_T(\overline{S})=\{ v_1, \ldots, v_d\}\subseteq T^n.
$$
For any $1\leq i\leq d$, put $L_i=V_H(S^{\prime}_{v_i})\subseteq H^{nk}$. Here $S^{\prime}_{v_i}$ denotes the set of all $w^{\prime}_{v_i}$, such that $w\in S$. Define also
$$
K_i=\{ \overline{h}\in H^n:\ (\overline{h})^{\prime}\in L_i\}\subseteq H^n.
$$
We have $(K_i)^{\prime}\subseteq H^{nk}$ and since $H$ is equationally Artinian, there exists a finite subset $K^0_i\subseteq K_i$, such that
$$
\Rad_H((K^0_i)^{\prime})=\Rad_H((K_i)^{\prime}).
$$
Assume that $E_0=\cup_{i=1}^dv_iK^0_i\subseteq A^n$. We show that $E_0\subseteq E$. Let $v_i\overline{h}\in E_0$. Then $\overline{h}\in K_i$ and hence
$$
\overline{S}(\lambda(v_i\overline{h}))=\overline{S}(v_i)=1,
$$
and
$$
S^{\prime}_{v_i}((v_i\overline{h})^{\prime})\in S^{\prime}_{v_i}(L_i)=1.
$$
This means that $v_i\overline{h}\in V_A(S)=E$. Therefore $E_0\subseteq E$.

Now, we claim that $\Rad_A(v_iK^0_i)=\Rad_A(v_iK_i)$. To prove this claim, assume that $w$ belongs to the left hand side. Then $w(v_iK^0_i)=1$ and hence $w^{\prime}((v_iK^0_i)^{\prime})=1$. This shows that $w^{\prime}_{v_i}\in \Rad_H((v_iK^0_i)^{\prime})$. Recall that, by the definition of the map $v\mapsto v^{\prime}$, we have  $(v_iK^0_i)^{\prime})=(K^0_i)^{\prime}$ and hence $w^{\prime}_{v_i}\in\Rad_H((K^0_i)^{\prime})=\Rad_H((K_i)^{\prime})=\Rad_H((v_iK_i)^{\prime})$. Therefore, for any $\overline{h}\in K_i$, we have $w^{\prime}_{v_i}((v_i\overline{h})^{\prime})=1$, and since in the same time $\overline{w}(\lambda(v_i\overline{h}))=1$, we have $w(v_iK_i)=1$. This proves the claim.

We  now, prove that $\Rad_A(E_0)=\Rad_A(E)$. Let $w$ be an element of the left hand side and $v\in E$. We have $S(v)=1$ and
$$
w\in \bigcap_{i=1}^d\Rad_A(v_iK^0_i).
$$
Note that $v=\lambda(v)\overline{h}$, for some $\overline{h}\in H^n$. We have $\overline{S}(\lambda(v))=1$, so there is an index $i$ such that $\lambda(v)=v_i$. Therefore, $v=v_i\overline{h}$. On the other side, since $S^{\prime}_{v}(V^{\prime})=1$, so
$$
1=S^{\prime}_v(v^{\prime})=S^{\prime}_{v_i}((v_i\overline{h})^{\prime}).
$$
Hence, $(v_i\overline{h})^{\prime}\in L_i$, and therefore $\overline{h}\in K_i$. Now, by the above claim, we have
$$
w\in \Rad_A(v_iK^0_i)=\Rad_A(v_iK_i),
$$
and hence $w(v)=1$. This shows that $w\in \Rad_A(E)$.
\end{proof}

Theorem 2 shows that any virtually finitely generated Abelian group is equationally Artinian as well as any finite extension of the additive group of any field. This gives us a large class of examples of such groups. We now come to Theorem 3. Note that the similar theorem (\cite{BMRom}) for the equationally Noetherian case deals with the Zarizki topology of $G^1$ and its closed normal subgroups. The dual case here deals with the radical topology of $G[t]$ and its closed normal subgroups.

\begin{proof}
Assume that
$$
R=\bigcup_{i=1}^m\Rad_G(K_i),
$$
where $K_i\subseteq G$. Note that $G$ is equationally Artinian as $G[t]$ is so. Hence every $K_i$ can be chosen finite. Let $H=G[t]/R$ be not equationally Artinian. Hence there exists a number $n$ and a subset $E\in H^n$ such that $\Rad_H(E)\neq \Rad_H(E_0)$, for any finite subset $E_0\subseteq E$. Assume that $e_0\in E$ is an arbitrary element. As $\Rad_H(E)\neq \Rad_H(\{ e_0\})$, there exist elements $f_1\in \Rad_H(\{ e_0\})$ and $e_1\in E$, such that $f_1(e_1)\neq 1$. Similarly, we have $\Rad_H(E)\neq \Rad_H(\{ e_0, e_1\})$, so there exist elements $f_2\in \Rad_H(\{ e_0, e_1\})$ and $e_2\in E$, such that $f_2(e_2)\neq 1$. Repeating this argument, we obtain two infinite sequences
$$
f_1, f_2, f_3, \ldots \in H[X],
$$
$$
e_0, e_1, e_2, \ldots \in E,
$$
such that for any $i$, $f_i(e_0)=f_i(e_1)=\cdots =f_i(e_{i-1})=1$, but $f_i(e_i)\neq 1$. Note that, here $X=\{x_1, \ldots, x_n\}$ and so every element of $H[X]$ is a word in $t$ and elements of $X$ with coefficients in $G$. Suppose $q:G[t, X]\to H[X]$ is the canonical map sending elements of $G$ to their cosets, and fixing elements of $X$ and the element $t$. Suppose also that $\psi:(G[t])^n\to H^n$ is the map
$$
\psi(u_1, \ldots, u_n)=(u_1R, \ldots, u_nR).
$$
Choose a pre-image $\overline{f}_i$ for $f_i$ under $q$ and a pre-image $\overline{e}_i$ for $e_i$ under $\psi$. Hence, we have $\overline{f}_i\in G[t, X]$ and $\overline{e}_i\in (G[t])^n$. For any $i$, we have $f_i(e_0)=1$, so $\overline{f}_i(\overline{e}_0)\in R$. This shows that, there exists an infinite sequence of numbers
$$
i_1(0)<i_2(0)<i_3(0)<\cdots,
$$
and a number $1\leq p_0\leq m$, such that
$$
\overline{f}_{i_1(0)}(\overline{e}_0),  \overline{f}_{i_2(0)}(\overline{e}_0), \overline{f}_{i_3(0)}(\overline{e}_0), \ldots \in \Rad_G(K_{p_0}).
$$
Equivalently, this shows that for all $s$, we have
$$
\overline{f}_{i_s(0)}\in \Rad_{G[t]}(\overline{e}_0(K_{p_0})).
$$
By a similar argument, we obtain an infinite subsequence of $\{ i_s(0)\}$ of the form
$$
i_1(1)<i_2(1)<i_3(1)<\cdots,
$$
and a number $1\leq p_1\leq m$, such that for all $s$, we have
$$
\overline{f}_{i_s(1)}\in \Rad_{G[t]}(\overline{e}_1(K_{p_1})).
$$
We continue this process to find an infinite subsequence
$$
i_1(k)<i_2(k)<i_3(k)<\cdots,
$$
of the previous sequence, and a number $1\leq p_k\leq m$, such that
$$
\overline{f}_{i_s(k)}\in \Rad_{G[t]}(\overline{e}_k(K_{p_k})),
$$
for all $s$. Note that all sets $\overline{e}_i(K_{p_i})$ are finite as $K_i$'s are finite. Let
$$
K=\bigcup_{i=0}^{\infty}\overline{e}_i(K_{p_i})\subseteq (G[t])^n.
$$
By assumption, $G[t]$ is equationally Artinian, so there exists an index $l$, such that
$$
\Rad_{G[t]}(K)=\Rad_{G[t]}(\bigcup_{i=0}^{l}\overline{e}_i(K_{p_i}).
$$
Assume that $j>l$. Then for any $s$, we have
$$
\overline{f}_{i_s(j)}\in \bigcap_{i=1}^l\Rad_{G[t]}\overline{e}_i(K_{p_i})=\Rad_{G[t]}(K).
$$
Suppose $k=i_1(j)$. Then $\overline{f}_k\in \Rad_{G[t]}(\overline{e}_k(K_{p_k}))$, and hence $\overline{f}_k(\overline{e}_k)\in \Rad_G(K_{p_k})\subseteq R$. This shows that $f_k(e_k)=1$, a contradiction. Hence $H$ is equationally Artinian.
\end{proof}

As a final note we must say that Theorem 3 holds for the general case of $H=G[T]/R$, where $T$ is an arbitrary set of variables. At this moment we don't know if even the free group $F[T]$ is equationally Artinian. So, we are not sure about the situation of $G[T]$ as well.

\end{document}